\documentclass[11pt]{article}

\usepackage[margin=1in]{geometry}
\usepackage[T1]{fontenc}
\usepackage{amsmath,amssymb,amsthm}
\usepackage{xcolor}
\usepackage[most]{tcolorbox}
\usepackage[mono,sfdefault]{notomath}
\usepackage{titlesec}
\usepackage[hidelinks]{hyperref}

\titleformat*{\section}{\normalfont\rmfamily\Large\bfseries}
\titleformat*{\subsection}{\normalfont\rmfamily\large\bfseries}
\titleformat*{\subsubsection}{\normalfont\rmfamily\normalsize\bfseries}
\titleformat*{\paragraph}{\normalfont\rmfamily\normalsize\bfseries}
\titleformat*{\subparagraph}{\normalfont\rmfamily\normalsize\bfseries}

\definecolor{myred}{RGB}{215, 0, 21}
\definecolor{mygreen}{RGB}{36, 138, 61}
\definecolor{myblue}{RGB}{0, 64, 221}
\definecolor{myyellow}{RGB}{178, 80, 0}
\definecolor{mycyan}{RGB}{0, 113, 164}
\definecolor{myindigo}{RGB}{54, 52, 163}
\definecolor{myorange}{RGB}{201, 52, 0}
\definecolor{mypink}{RGB}{211, 15, 69}
\definecolor{mypurple}{RGB}{137, 68, 171}
\definecolor{myteal}{RGB}{0, 130, 153}
\definecolor{mygray}{RGB}{108, 108, 112}

\newcommand{\bfemph}[1]{\emph{\textbf{#1}}}

\newtheorem{theorem}{Theorem}[section]
\newtheorem{corollary}[theorem]{Corollary}

\theoremstyle{remark}

\theoremstyle{plain}

\tcolorboxenvironment{corollary}{%
  enhanced jigsaw,colframe=mycyan,interior hidden,
  before skip=10pt,after skip=10pt,parbox=false}

  \tcolorboxenvironment{lemma}{%
  enhanced jigsaw,colframe=mycyan,interior hidden,
  before skip=10pt,after skip=10pt,parbox=false}

\tcolorboxenvironment{theorem}{%
  enhanced jigsaw,colframe=mycyan,interior hidden,
  before skip=10pt,after skip=10pt,parbox=false}

\tcolorboxenvironment{proof}{%
  blanker,breakable,left=5mm,
  before skip=10pt,after skip=10pt,
  borderline west={1mm}{0pt}{myred},parbox=false}

\title{\rmfamily An elementary proof of a tight \(\Delta+1\) bound for the additive chromatic number}
\author{
    Alex Koutsoutis\\\texttt{akoutsoutis@tamu.edu}
    \and
    Joseph Koutsoutis\\\texttt{jsk245@scarletmail.rutgers.edu}
}
\date{}

\begin{document}

\maketitle

\begin{abstract}
\setlength{\listparindent}{0pt}
\noindent
An additive coloring assigns positive integer labels to the vertices of a graph so that adjacent vertices have different sums of their neighbors' labels.
The additive chromatic number \(\eta(G)\) is the least \(k\) for which labels from \([k]\) suffice.
We give an elementary orientation argument proving that every finite simple graph \(G\) satisfies \(\eta(G) \leq \Delta(G) + 1\).
This argument yields a \((\Delta(G) + 1)\)-additive-coloring in polynomial time and improves upon the current best bound of \(\eta(G) \leq \Delta(G)^{2} - \Delta(G) + 1\), valid for \(\Delta(G) \geq 2\), from Akbari, Ghanbari, Manaviyat, and Zare~\cite{AGMZ13}.
\end{abstract}

\section{Introduction}

All graphs in this paper are simple and finite.
Let \(G\) be a graph and \(f: V(G) \to \mathbb{N}^{*}\) a vertex coloring of \(G\).
For a vertex \(v \in V(G)\), write
\begin{equation*}
 S_{f}(v) = \sum_{w \in N_{G}(v)}f(w)\text{.}
\end{equation*}
If \(S_{f}(u) \neq S_{f}(v)\) for every edge \(\{u, v\} \in E(G)\), then \(f\) is an \bfemph{additive coloring} of \(G\).
The \bfemph{additive chromatic number} of \(G\), denoted by \(\eta(G)\), is the least positive integer \(k\) for which \(G\) admits an additive coloring with labels in \([k]\).

Additive colorings were introduced by Czerwi\'{n}ski, Grytczuk, and \.{Z}elazny~\cite{CGZ09} using the terms \bfemph{lucky labeling} and \bfemph{lucky number} for additive coloring and additive chromatic number, respectively.

For general graphs, the best known upper bound on \(\eta(G)\) in terms of the maximum degree has been quadratic.
Akbari, Ghanbari, Manaviyat, and Zare~\cite{AGMZ13} studied the list version of the problem: the \bfemph{lucky choice number} \(\eta_{\ell}(G)\) is the least \(k\) such that, whenever each vertex \(v\) is given a set \(L(v)\) of \(k\) positive integers, there is an additive coloring \(f\) with \(f(v) \in L(v)\) for every \(v\).
They proved that \(\eta_{\ell}(G) \leq \Delta(G)^{2} - \Delta(G) + 1\) for \(\Delta(G) \geq 2\).
Since \(\eta(G) \leq \eta_{\ell}(G)\), this gives
\begin{equation}\label{eq:upper-bound}
\eta(G) \leq \Delta(G)^{2} - \Delta(G) + 1, \qquad \Delta(G) \geq 2\text{.}
\end{equation}
Sever\'{\i}n~\cite{Sev20} describes this as the best known upper bound on \(\eta(G)\) in terms of \(\Delta(G)\), and we are not aware of any later improvement that holds for all graphs.

Gossett~\cite{Gos24} proved an additive analogue of the Alon--Tarsi theorem~\cite{AT92}: if some orientation of \(G\) satisfies an Alon--Tarsi-type condition, then lists whose sizes exceed the outdegrees by one suffice.
He conjectured that every graph has such an orientation; this conjecture would imply \(\eta_{\ell}(G) \leq \Delta(G) + 1\) for every graph \(G\).

We improve \eqref{eq:upper-bound} to \(\eta(G) \leq \Delta(G) + 1\), which is tight since \(\eta(K_{\Delta+1}) = \Delta + 1\).
In fact, we prove a stronger statement: for any choice of positive integers \(\{a_{v}\}_{v\in V(G)}\), there is an additive coloring \(f\) with \(f(v) \in \{a_{v}, a_{v} + 1, \ldots, a_{v} + d(v)\}\) for every vertex \(v\).
Moreover, this coloring is obtainable in polynomial time.

\section{Proof of the upper bound}

The proofs of the previous upper bounds rely on algebraic tools such as the Combinatorial Nullstellensatz and Alon--Tarsi-type orientation criteria~\cite{AGMZ13,CGZ09,Gos24}.
Our proof is elementary and constructive.

\begin{theorem}\label{thm:upper}
Let \(G\) be a graph, and assign a positive integer \(a_{v}\) to every vertex.
There is an additive coloring satisfying
\begin{equation*}
 f(v) \in \{a_{v}, a_{v} + 1, \ldots, a_{v} + d(v)\}, \qquad v \in V(G)\text{.}
\end{equation*}
In particular, \(\eta(G) \leq \Delta(G) + 1\).
\end{theorem}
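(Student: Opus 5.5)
The plan is to build \(f\) greedily along a vertex ordering \(v_{1}, \ldots, v_{n}\), charging each edge constraint to a single vertex. The list \(\{a_{v}, \ldots, a_{v}+d(v)\}\) has \(d(v)+1\) values, so the goal is that each vertex is charged with at most \(d(v)\) constraints, each forbidding at most one value. The basic identity is, for an edge \(uw\),
\begin{equation*}
 S_{f}(u) - S_{f}(w) = f(w) - f(u) + \sum_{x \in N(u)\setminus N[w]} f(x) - \sum_{x \in N(w)\setminus N[u]} f(x)\text{.}
\end{equation*}
Thus \(S_{f}(u)-S_{f}(w)\) is a linear form with coefficients \(\pm 1\) on the vertices of \(N(u)\triangle N(w)\), and \(u,w\) themselves always appear with coefficients \(-1,+1\). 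Common neighbours cancel. Once every variable in this form except one is fixed, the constraint \(S_{f}(u)\neq S_{f}(w)\) forbids exactly one value of the remaining variable.

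First, I would formalise the charging. Call the last vertex of \(N(u)\triangle N(w)\) in the ordering the \emph{responsible} vertex of the edge \(uw\). When that vertex is coloured, all other variables of the form are already fixed, so the edge forbids at most one value. The theorem then follows if the ordering, or more generally an orientation of the edges encoding "who is coloured last", can be chosen so that every vertex \(x\) is responsible for at most \(d(x)\) edges.

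The natural hope is to make the responsible vertex of \(uw\) an endpoint. Then \(x\) is responsible only for edges at \(x\), at most \(d(x)\) of them, and each forbids one value of \(f(x)\); that would finish the proof and give a polynomial-time algorithm.

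The main obstacle is exactly this charging step. A fixed global ordering cannot in general guarantee that the last vertex of \(N(u)\triangle N(w)\) is \(u\) or \(w\): a vertex adjacent to \(u\) but not \(w\) may come later. My first attempt would be to allow recolouring. Process edges rather than vertices, maintaining an orientation in which each edge points to the endpoint that last "fixed" it. When a vertex \(x\) is recoloured, edges \(uw\) with \(x\) adjacent to exactly one endpoint may break and are then re-fixed by an endpoint.

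Termination would be controlled by a potential function. The candidate I would try is the number of satisfied edges together with a lexicographic tie-break on \(\sum_{v} f(v)\) or on \(\sum_{v} S_{f}(v)^{2}\). I would check that the choice of the new value among the \(d(x)+1\) candidates can be made to strictly improve it. If that fails, the fallback is to restrict the ordering: for example, a degeneracy or BFS ordering in which each later vertex sees both endpoints or neither. I would then verify the counting bound \(\le d(x)\) directly for that ordering.

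The "in particular" clause is immediate from the main statement by taking \(a_{v}=1\) for all \(v\).
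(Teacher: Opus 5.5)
Your proposal does not contain a proof. The charging step you flag as the main obstacle is the entire difficulty, and neither route you suggest resolves it.

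No static ordering can work, even with the weaker goal that each vertex \(x\) is responsible for at most \(d(x)\) edges. The last vertex \(z\) lies in \(N(u)\triangle N(w)\) for every edge at \(z\), and also for every edge between \(N(z)\) and \(V(G)\setminus N[z]\). So \(z\) is responsible for \(d(z)\) edges plus all of the latter ones. The bound \(\leq d(z)\) then forces \(N[z]\) to be a union of components, so \(z\) must be adjacent to every other vertex when \(G\) is connected. This already fails for \(P_{4}\), so your BFS or degeneracy fallback ordering does not exist in general.

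The recoloring scheme has no termination argument either. Changing \(f(x)\) shifts \(S_{f}(y)\) for every \(y\in N(x)\). It can therefore break any of the up to \(d(x)\Delta(G)\) edges \(uw\) with \(x\in N(u)\triangle N(w)\), and \(d(x)+1\) candidate values are too few to dodge them all. This single-vertex counting is the kind of argument that only yields bounds quadratic in \(\Delta(G)\). You have not shown, and the counting does not give, that the number of satisfied edges (the primary term of your potential) can be made to increase.

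The missing idea is to let an orientation \(D\) determine the colors, rather than choosing them against constraints. Set \(f_{D}(v)=a_{v}+d_{D}^{+}(v)\); this lies in the required list automatically. Reversing an arc \((u,v)\) changes two colors at once: \(f(u)\) by \(-1\) and \(f(v)\) by \(+1\). Take the quadratic potential \(\Phi(D)=\sum_{\{x,y\}\in E(G)}f_{D}(x)f_{D}(y)\). A direct computation gives \(\Phi(D')-\Phi(D)=S_{f}(v)-S_{f}(u)-1\). Hence an orientation minimizing \(\Phi\) satisfies \(S_{f}(u)<S_{f}(v)\) on every arc, and \(f_{D}\) is additive.

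Read algorithmically, this is a coupled version of your recoloring idea. Reversing any arc with \(S_{f}(u)\geq S_{f}(v)\) strictly decreases \(\Phi\), which is bounded below. So the process terminates without ever needing to control how many other edges a single step breaks.
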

\begin{proof}
For an orientation \(D\) of \(G\), put
\begin{equation*}
 f_{D}(v) = a_{v} + d_{D}^{+}(v), \qquad \Phi(D) = \sum_{\{x, y\} \in E(G)}f_{D}(x)f_{D}(y)\text{.}
\end{equation*}
Choose \(D\) minimizing \(\Phi\) among the finitely many orientations.
If \((u, v)\) is an arc in \(D\) and \(D'\) is the orientation with this one arc reversed, then we compute
\begin{align}
   f_{D'}(u)f_{D'}(v) - f_{D}(u)f_{D}(v) &= \left(a_{u} + d_{D'}^{+}(u)\right)\left(a_{v}+d_{D'}^{+}(v)\right) - \left(a_{u} + d_{D}^{+}(u)\right)\left(a_{v} + d_{D}^{+}(v)\right) \nonumber \\ 
   &= \left(a_{u} + d_{D}^{+}(u) - 1\right)\left(a_{v} + d_{D}^{+}(v) + 1\right) - \left(a_{u} + d_{D}^{+}(u)\right)\left(a_{v} + d_{D}^{+}(v)\right) \nonumber \\ 
   &= f_{D}(u) - f_{D}(v) - 1\text{.} \label{eq:1}
\end{align}
For \(w \neq u, v\), using similar computation to above, if \(\{u,w\} \in E(G)\),
\begin{equation}
   f_{D'}(u)f_{D'}(w) - f_{D}(u)f_{D}(w) = -f_{D}(w)\text{.} \label{eq:2}
\end{equation}
and if \(\{w, v\} \in E(G)\),
\begin{equation}
   f_{D'}(w)f_{D'}(v) - f_{D}(w)f_{D}(v) = f_{D}(w)\text{.} \label{eq:3}
\end{equation}
Combining \eqref{eq:1}, \eqref{eq:2}, and \eqref{eq:3} and writing \(f = f_{D}\) gives 
\begin{equation}\label{eq:4}
  \Phi(D') - \Phi(D) = S_{f}(v) - S_{f}(u) - 1\text{.}
\end{equation}
By minimality of \(D\), \(S_{f}(v) \geq S_{f}(u) + 1\) on every arc \((u, v)\). \qedhere
\end{proof}

This translates to a \((\Delta(G) + 1)\)-additive-coloring in polynomial time:

\begin{corollary}
After setting \(a_{v} = 1\) for all \(v \in V\) as in theorem~\ref{thm:upper}, start from any orientation and repeatedly reverse arcs whose endpoints have equal neighborhood sums.
This process terminates at an additive coloring after at most
\begin{equation*}
 |E(G)|\left((\Delta(G) + 1)^{2} - 1\right)
\end{equation*}
reversals.
\end{corollary}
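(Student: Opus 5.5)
We use the potential \(\Phi\) from the proof of Theorem~\ref{thm:upper} with \(a_{v} = 1\) for every \(v\), so \(f_{D}(v) = 1 + d_{D}^{+}(v) \in [d(v)+1] \subseteq [\Delta(G)+1]\). The rule is: while some edge \(\{u, v\}\) has \(S_{f}(u) = S_{f}(v)\), reverse it, whichever direction it currently has. Say the arc is \((u, v)\). By \eqref{eq:4} the potential changes by
\begin{equation*}
\Phi(D') - \Phi(D) = S_{f}(v) - S_{f}(u) - 1 = -1 .
\end{equation*}
So every reversal lowers \(\Phi\) by exactly one. Since \(\Phi\) is an integer, the process must terminate. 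At termination no edge has equal neighborhood sums, so \(f_{D}\) is an additive coloring with labels in \([\Delta(G)+1]\). This already gives correctness; what remains is the count.

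\textbf{Bounding the number of reversals.} Because each step lowers \(\Phi\) by exactly \(1\), the number of reversals is at most \(\Phi(D_{0}) - \min\Phi\), where \(D_{0}\) is the starting orientation. Every term \(f_{D}(x)f_{D}(y)\) in \(\Phi\) lies in \([1, (\Delta(G)+1)^{2}]\), because each label lies in \([1, \Delta(G)+1]\). Summing over edges,
\begin{equation*}
|E(G)| \leq \Phi(D) \leq |E(G)|\,(\Delta(G)+1)^{2}
\end{equation*}
for every orientation \(D\). Hence the number of reversals is at most
\begin{equation*}
|E(G)|\,(\Delta(G)+1)^{2} - |E(G)| = |E(G)|\bigl((\Delta(G)+1)^{2} - 1\bigr),
\end{equation*}
which is the stated bound. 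Each step, finding an edge with equal sums and updating the labels and the sums \(S_{f}\), takes polynomial time, so the whole procedure runs in polynomial time.

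\textbf{Main obstacle.} The key point is that \eqref{eq:4} holds for an \emph{arbitrary} orientation and is not tied to the minimizer. Checking this means reading the derivation of \eqref{eq:1}--\eqref{eq:3} as pure algebra: it uses no minimality, only that reversing \((u, v)\) lowers \(d^{+}(u)\) by one and raises \(d^{+}(v)\) by one. It also matters that the equal-sum condition is symmetric in \(u\) and \(v\), so orienting the edge either way before reversal gives a decrease of exactly \(1\). A cruder bound would use \(\Phi \geq 0\) and lose the \(-1\); the trivial lower bound \(f \geq 1\) on every label is what yields the \(-|E(G)|\) term.
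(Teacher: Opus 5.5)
Your proposal is correct and follows the paper's proof: by \eqref{eq:4}, each equal-sum reversal lowers \(\Phi\) by exactly \(1\), and since every label stays in \([1, \Delta(G)+1]\), \(\Phi\) is confined to \([|E(G)|, |E(G)|(\Delta(G)+1)^{2}]\), which gives the bound. You also spell out two points the paper leaves implicit: \eqref{eq:4} is pure algebra valid for any orientation, not just the minimizer, and the equal-sum condition is symmetric in the two endpoints, so the arc's current direction does not matter.
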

\begin{proof}
Each reversal decreases \(\Phi\) by exactly \(1\) by \eqref{eq:4}.
Through the process, we maintain \(1 \leq f_{D}(v) \leq \Delta(G) + 1\).
Hence
\begin{equation*}
  |E(G)| \leq \Phi(D) \leq |E(G)|(\Delta(G) + 1)^{2}\text{.} \qedhere
\end{equation*}
\end{proof}

\section{Tool and computational resource disclosure}

The proofs in this paper were assisted by ChatGPT-6 Astra and Claude Opus 5.5.
We subsequently reviewed and refined the ideas, wrote them as shown above, and take full responsibility for the content of this paper.

\end{document}